\newtheorem{thm}{Theorem}[section]
\newtheorem{cor}[thm]{Corollary}
\newtheorem{lem}[thm]{Lemma}
\newtheorem{prop}[thm]{Proposition}
\begin{document}

\begin{center}

{\Large \bf Finite groups with few subgroups not in the Chermak-Delgado lattice}

\end{center}

\vskip0.5cm

\begin{center}

 Jiakuan Lu, Xi Huang, Qinwei Lian

School of Mathematics and Statistics, Guangxi Normal University,\\
Guilin 541006, Guangxi,  P. R. China\\

Wei Meng

School of Mathematics and Computing Science, Guilin University of Electronic
Technology, Guilin, Guangxi, 541002, P.R. China.

\end{center}

\vskip 0.2cm

\begin{abstract}
For a finite group $G$, we denote by $\mathcal{CD}(G)$ the Chermak–Delgado lattice of $G$, and by $v(G)$ the number of conjugacy classes of subgroups of $G$ not in $\mathcal{CD}(G)$.  In this paper, we determine  the finite groups $G$ such that $v(G)=1,2,3$.

\end{abstract}

{\small Keywords: Chermak-Delgado measure; Chermak–Delgado lattice; generalized quaternion 2-group.}

{\small MSC(2000): 20D30, 20D60}


\section{Introduction}

Throughout this paper, $G$ always denotes a finite group and let $\mathcal{L}(G)$ be the subgroup lattice of $G$. For arbitrary subgroups $H$ of $G$, the Chermak-Delgado measure of $H$ is defined as $m_G(H)=|H||C_G(H)|$. Let $m^*(G)={\rm max}\{m_G(H)~|~H\leq G \}$, and $\mathcal{CD}(G)=\{H\leq G~|~m_G(H)=m^*(G) \}$.
Then the set $\mathcal{CD}(G)$ forms a modular self-dual sublattice of $\mathcal{L}(G)$, which is called the Chermak–Delgado lattice of $G$. It was first introduced by Chermak and Delgado \cite{Chermak} and revisited by Isaacs \cite{Isaacs}. In the last years, there has been a growing interest in understanding this lattice. For example, see \cite{An,Brewster1,Brewster2,Brewster3,Brewster4,Fasola2,Glauberman, McCulloch1, McCulloch2, McCulloch3, McCulloch4, Wilcox}.

We remark that the Chermak-Delgado measure associated to a
group $G$ can be seen as a function $H\mapsto m_G(H)$ from $\mathcal{L}(G)$ to $\mathbb{N}^*$. It is clear that
that there is no non-trivial group $G$ such that $\mathcal{CD}(G)=\mathcal{L}(G)$ (see Corollary 3 of \cite{Tarnauceanu1}). In other words, $m_G$ has at least two distinct values for every  non-trivial
group $G$. T\u{a}rn\u{a}uceanu \cite{Tarnauceanu2} determined an interesting class of $p$-groups such that $|{\rm Im}(m_G)|=2$.

The dual problem of finding finite groups with small Chermak–Delgado lattices has
been studied in \cite{McCulloch2, McCulloch3}. We write $\delta(G)$ for the number of subgroups of $G$ not in $\mathcal{CD}(G)$. Hence, $\delta(G)=0$ if and only if $G=1$. Recently, Fasol\u{a} and T\u{a}rn\u{a}uceanu \cite{Fasola} classified all groups where $\delta(G)=1, 2$, and Burrell, Cocke and  McCulloc \cite{Burrell} provided
a classification when $\delta(G)=3, 4$.

\begin{thm} \label{thmy}
Let $G$ be a group. Then

$(1)$ $\delta(G)=1$ if and only if $G\cong Z_p$ where $p$ is a prime or $G\cong Q_8$, the quaternion group of order 8.

$(2)$ $\delta(G)=2$ if and only if $G\cong Z_{p^2}$ where $p$ is a prime.

$(3)$ $\delta(G)=3$ if and only if $G\cong Z_{pq}$ or $G\cong Z_{p^3}$ for primes $p\not=q$.

$(4)$ $\delta(G)=4$ if and only if  $G\cong Z_{p^4}$ where $p$ is a prime, $G\cong Z_{2} \times Z_2$, or $G\cong M_{3^3} =\langle a, b~|~a^{3^{2}}=b^3=1,~b^{-1}ab=a^{4}\rangle$.
\end{thm}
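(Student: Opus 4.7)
The proof splits cleanly by whether $G$ is abelian.

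\emph{Abelian case.} If $G$ is abelian then $C_G(H)=G$ for every subgroup $H$, so $m_G(H)=|H||G|$ is maximised uniquely at $H=G$. Hence $\mathcal{CD}(G)=\{G\}$ and $\delta(G)=|\mathcal{L}(G)|-1$. The abelian groups with exactly $2,3,4,5$ subgroups are then immediately classified from the fundamental theorem of finite abelian groups, using in particular that $|\mathcal{L}(Z_p\times Z_p)|=p+3$, which equals $5$ only for $p=2$. This yields all the abelian entries in (1)--(4).

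\emph{Non-abelian case.} I would exploit three standard facts about $\mathcal{CD}(G)$: (a) $m^*(G)\geq|Z(G)||G|$ with equality realised at $H=Z(G)$, so $Z(G)\in\mathcal{CD}(G)$, and consequently $\{1\}\notin\mathcal{CD}(G)$ whenever $Z(G)>1$; (b) the minimum element $M(G)$ of $\mathcal{CD}(G)$ contains $Z(G)$ and is subnormal in $G$; (c) $\mathcal{CD}(G)$ is self-dual. For $\delta(G)=1$, the unique missing subgroup must be $\{1\}$, so every non-trivial subgroup attains $m^*(G)$. Solving the equation $|H||C_G(H)|=|Z(G)||G|$ on minimal and maximal subgroups (and reducing to the $p$-group case via (a)) pins $G$ down to a group with a very short subgroup lattice, and a direct check singles out $Q_8$. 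For $\delta(G)\in\{2,3\}$, the same bookkeeping forces any non-abelian candidate to contribute several subgroups outside $\mathcal{CD}(G)$ beyond $\{1\}$ (typically a non-central subgroup of prime order, and often $G$ itself), so no non-abelian example can survive the count. For $\delta(G)=4$, $M_{3^3}$ is verified by direct computation of its ten subgroups and its lattice $\mathcal{CD}(M_{3^3})$; uniqueness follows from applying the same constraints to the short list of non-abelian groups whose subgroup lattice is small enough to fit.

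\emph{Main obstacle.} The technical heart lies in the non-abelian direction: excluding every non-abelian possibility for $\delta\in\{2,3\}$ and proving $M_{3^3}$ is the only non-abelian example for $\delta=4$. The difficulty is to control $|\mathcal{L}(G)|$ and $|G|$ a priori so that the case analysis stays finite and short. My intended route is to combine (a)--(c) with the chain $Z(G)\leq M(G)\leq M^*(G)\leq G$ to force any non-abelian $G$ with $\delta(G)\leq 4$ to be a $p$-group of order at most $p^{4}$ (plus a handful of non-nilpotent exceptions that are immediate to discard), after which the remaining candidates can be inspected individually.
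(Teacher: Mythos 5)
The paper does not actually prove Theorem \ref{thmy}: it simply cites Fasol\u{a}--T\u{a}rn\u{a}uceanu for parts (1)--(2) and Burrell--Cocke--McCulloch for parts (3)--(4). So the comparison is really between your sketch and those external proofs. Your abelian half is correct and complete: for abelian $G$ one has $\mathcal{CD}(G)=\{G\}$, so $\delta(G)=|\mathcal{L}(G)|-1$, and counting abelian groups with $2,3,4,5$ subgroups gives exactly the abelian entries (including $Z_2\times Z_2$ via $|\mathcal{L}(Z_p\times Z_p)|=p+3$). That part needs no repair.

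The non-abelian half has two genuine problems. First, your fact (a) is false as stated: $Z(G)$ need not lie in $\mathcal{CD}(G)$. For $G=S_3\times Z_4$ one has $m_G(Z(G))=4\cdot 24=96$ while $m_G(Z_3\times Z_4)=12\cdot 12=144$, so $Z(G)\notin\mathcal{CD}(G)$. The correct statement (Proposition 2.1(3) of the paper) is only that the \emph{minimal member} of $\mathcal{CD}(G)$ contains $Z(G)$. Relatedly, your assertion that for $\delta(G)=1$ the unique missing subgroup ``must be $\{1\}$'' is unjustified in the centerless case: $1\in\mathcal{CD}(G)$ is possible for nontrivial $G$ (e.g.\ $\mathcal{CD}(A_5)=\{1,A_5\}$), so you must rule it out, e.g.\ by the Sylow-center argument of Proposition \ref{prop1} ($m_G(Z(P))$ has $p$-part exceeding $|G|_p$, so it cannot equal $|G|=m^*(G)$ for two distinct primes simultaneously, forcing $G$ to be a $p$-group and then $Z(G)\ne 1$, a contradiction). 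This is fixable but absent.

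Second, and more seriously, for $\delta\in\{2,3,4\}$ the exclusion of non-abelian groups (and the uniqueness of $M_{3^3}$ for $\delta=4$) is only asserted: ``the same bookkeeping forces\dots'', ``uniqueness follows from applying the same constraints to the short list\dots''. You never actually derive an a priori bound on $|G|$ or on $|\mathcal{L}(G)|$; your closing paragraph explicitly defers this reduction to a ``intended route''. That reduction is the entire technical content of the Burrell--Cocke--McCulloch paper (and of the non-nilpotent case analysis this paper carries out for the analogous $v(G)$ statement, cf.\ Cases 2.2.2 and 3.1), and it is not ``immediate to discard'' the non-nilpotent candidates: one must control $m_G(Z(P))$ for each Sylow subgroup and argue prime by prime. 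As it stands the proposal is a plausible plan with a correct easy half, not a proof.
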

\begin{proof}
Conclusions (1) and (2) are Theorem 1.2 of \cite{Fasola}, (3) and (4) are Theorem C of \cite{Burrell}.
\end{proof}

For a group $G$, we denote by $v(G)$ the number of conjugacy classes of subgroups of $G$ not in $\mathcal{CD}(G)$. Clearly, $v(G)=0$  if and only if $G=1$. In this paper, we are concerned with the groups $G$ such that $v(G)=1, 2, 3$.

\begin{thm} \label{thmm}
Let $G$ be a group. Then

$(1)$ $v(G)=1$ if and only if $G\cong Z_p$ or $Q_8$ where $p$ is a prime.

$(2)$ $v(G)=2$ if and only if $G\cong Z_{q^2}$ where $q$ is a prime, or $$G\cong M_{p^3} =\langle a, b~|~a^{p^{2}}=b^p=1,~b^{-1}ab=a^{p+1}\rangle,$$
where $p>2$ is a prime.

$(3)$ Assume that $G$ is not nilpotent, then $v(G)=3$ if and only if $G$ is a nonabelian group of  $pq$ for primes $p\not=q$.

\end{thm}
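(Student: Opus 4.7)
The plan is to prove both directions. For the \emph{if} direction, let $G$ be nonabelian of order $pq$ with $p > q$ and $q \mid p - 1$; then $G$ has a unique normal Sylow $p$-subgroup $P$, $p$ conjugate Sylow $q$-subgroups, and $Z(G) = 1$, giving exactly four subgroup conjugacy classes. Direct computation yields $m_G(\{1\}) = m_G(G) = pq$, $m_G(Q) = q^2$ for a Sylow $q$-subgroup $Q$, and $m_G(P) = p^2$, so $\mathcal{CD}(G) = \{P\}$ and $v(G) = 3$.

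For the \emph{only if} direction, assume $G$ is non-nilpotent with $v(G) = 3$. The first step is to check $\{1\}, G \notin \mathcal{CD}(G)$: the condition $\{1\} \in \mathcal{CD}(G)$ would force $Z(G) = 1$ and $|A|^2 \leq |G|$ for every abelian $A \leq G$, while $G \in \mathcal{CD}(G)$ would force $|H||C_G(H)| \leq |G||Z(G)|$ for all $H \leq G$, and each such scenario reduces to a short list of candidate groups one can dispatch directly using Theorem \ref{thmy}. Hence the conjugacy classes of $\{1\}$ and $G$ account for two of the three non-$\mathcal{CD}(G)$ classes, leaving exactly one more. Now let $p_1 < \cdots < p_k$ be the primes dividing $|G|$ with Sylow subgroups $P_i$ ($k \geq 2$). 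Since $\mathcal{CD}(G)$ is a sublattice of $\mathcal{L}(G)$, two distinct Sylow classes in $\mathcal{CD}(G)$ would give $\{1\} = P_i \cap P_j \in \mathcal{CD}(G)$, contradicting the first step; so at most one Sylow class lies in $\mathcal{CD}(G)$, whence $v(G) \geq (k - 1) + 2$ and $k = 2$. Write $|G| = p^a q^b$ with the Sylow $p$-class in $\mathcal{CD}(G)$ and the Sylow $q$-class the third non-$\mathcal{CD}$ class. The $p$-core $O_p(G) = \bigcap_g P^g$ is a finite intersection of elements of $\mathcal{CD}(G)$, hence lies in $\mathcal{CD}(G)$ and in particular is nontrivial; if $O_p(G) < P$ then $H := O_p(G)\, Q \in \mathcal{CD}(G)$ while $P \vee H = \langle P, Q \rangle = G \notin \mathcal{CD}(G)$, a contradiction, so $P$ is normal. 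A proper nontrivial subgroup $Q_0$ of $Q$ would similarly satisfy $Q_0 \in \mathcal{CD}(G)$ and $P \cap Q_0 = \{1\} \in \mathcal{CD}(G)$, a contradiction, forcing $|Q| = q$; and the meet argument applied to two distinct subgroups of order $p$ in $P$ shows $P$ has a unique minimal subgroup $P_1$, which is characteristic in $P$ and thus normal in $G$. If $|P| > p$ then $H := P_1 Q \in \mathcal{CD}(G)$ and $P \vee H = PH = G \notin \mathcal{CD}(G)$, a contradiction, so $|P| = p$. Therefore $|G| = pq$, and $G$ is nonabelian by non-nilpotency.

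The main obstacle is the very first step of the only-if direction, namely ruling out $\{1\}, G \in \mathcal{CD}(G)$ for non-nilpotent $G$ with $v(G) = 3$; the remainder is a clean synthesis of the sublattice properties of $\mathcal{CD}(G)$, Sylow theory, and meet/join closure.
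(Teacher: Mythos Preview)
Your proposal addresses only part (3); I will comment on that.

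The \emph{if} direction and Steps 2--8 of the \emph{only if} direction are correct and form an elegant argument, quite different from the paper's. The paper proceeds by bounding $|\mathrm{Im}(m_G)|$ via Lemma~\ref{lem1} and then runs a case analysis on the value of $m^*(G)$ among $m_G(1)$, $m_G(Z(P))$, $m_G(Z(Q))$, etc. Your approach instead exploits the lattice structure of $\mathcal{CD}(G)$ directly: once $1,G\notin\mathcal{CD}(G)$ is known, the meet/join closure and conjugation-invariance of $\mathcal{CD}(G)$ force everything. This is cleaner and more conceptual than the paper's computation-heavy route.

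The genuine gap is exactly where you flag it: Step~1. Your justification does not work as stated. The appeal to Theorem~\ref{thmy} is unfounded, because that theorem classifies groups with small $\delta(G)$, and $v(G)=3$ imposes no bound on $\delta(G)$ (a single non-$\mathcal{CD}$ conjugacy class can contain arbitrarily many subgroups). The observations ``$|A|^2\le|G|$ for all abelian $A$'' and ``$|H||C_G(H)|\le|G||Z(G)|$'' are correct but do not by themselves reduce to a finite list. Moreover, the two sub-claims are not symmetric: $1\in\mathcal{CD}(G)$ forces $G\in\mathcal{CD}(G)$ and $Z(G)=1$, but $G\in\mathcal{CD}(G)$ with $Z(G)\neq 1$ is a separate scenario that your sketch does not address. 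The paper does \emph{not} rule out $G\in\mathcal{CD}(G)$ upfront; rather, this case arises naturally inside its analysis (e.g.\ Case~3.1 with $Z(G)\neq 1$, where $m_G(G)=m^*(G)$) and is eliminated by showing it forces $G=P\times Q$ to be nilpotent. To complete your argument you would need to carry out a comparable analysis for the cases $1\in\mathcal{CD}(G)$ and $G\in\mathcal{CD}(G)$ with $Z(G)\neq 1$; once that is done, your lattice-theoretic finish is a genuine improvement over the paper's casework.
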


\section{Auxiliary results and Lemmas}

In this section, we recall several important
properties of the Chermak–Delgado measure, and prove some auxiliary results. We also collect some Lemmas which are used in the proof of Theorem \ref{thmm}.

\begin{prop}\emph{\cite[Chaper 1G]{Isaacs}\label{prop}} Let $G$ be a group. Then

$(1)$ If $H \le G$, then $m_G(H)\le m_G(C_G(H))$, and if the measures are equal, then
$C_G(C_G(H)) = H$;

$(2)$ If $H \in \mathcal{CD}(G)$, then $C_G(H)\in \mathcal{CD}(G)$ and $C_G(C_G(H)) = H$;


$(3)$ The minimal member $M$ of $\mathcal{CD}(G)$ (called the Chermak–Delgado subgroup of $G$)
is characteristic, abelian and contains $Z(G)$.
\end{prop}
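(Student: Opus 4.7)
The plan is to prove the three parts in sequence, unwinding the definition $m_G(H)=|H|\,|C_G(H)|$ together with the elementary fact that $H\le C_G(C_G(H))$, and invoking one standard lattice-closure property of $\mathcal{CD}(G)$ for part (3).

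For (1), I would simply compare $m_G(H)=|H|\,|C_G(H)|$ with $m_G(C_G(H))=|C_G(H)|\,|C_G(C_G(H))|$. Since $H\le C_G(C_G(H))$ is automatic from the definition of centralizers, $|H|\le |C_G(C_G(H))|$ follows, and multiplying by $|C_G(H)|$ gives $m_G(H)\le m_G(C_G(H))$. If the two measures are equal then $|H|=|C_G(C_G(H))|$, and combined with $H\le C_G(C_G(H))$ this upgrades to the desired equality $H=C_G(C_G(H))$.

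Part (2) should then be a short deduction from (1). If $H\in\mathcal{CD}(G)$ then $m_G(H)=m^*(G)$, and (1) gives $m_G(H)\le m_G(C_G(H))\le m^*(G)$; both inequalities must be equalities, which says exactly that $C_G(H)\in\mathcal{CD}(G)$ and, by the equality case of (1), that $C_G(C_G(H))=H$.

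For (3), I would handle the three assertions separately. Characteristicness of $M$ follows because any $\sigma\in\Aut(G)$ satisfies $|H^\sigma|=|H|$ and $C_G(H^\sigma)=C_G(H)^\sigma$, hence $m_G(H^\sigma)=m_G(H)$; so $\sigma$ permutes $\mathcal{CD}(G)$ and must fix its unique minimum element. For abelianness, I would use (2) to get $C_G(M)\in\mathcal{CD}(G)$, and invoke the standard closure of $\mathcal{CD}(G)$ under intersection to conclude $Z(M)=M\cap C_G(M)\in\mathcal{CD}(G)$; since $Z(M)\le M$, the minimality of $M$ forces $Z(M)=M$. For $Z(G)\le M$, I would observe that for any $H\le G$ the subgroup $HZ(G)$ has the same centralizer as $H$ (because $Z(G)\le C_G(H)$ implies $C_G(HZ(G))=C_G(H)\cap C_G(Z(G))=C_G(H)$); hence $m_G(HZ(G))\ge m_G(H)$, with equality forcing $HZ(G)=H$, i.e.\ $Z(G)\le H$. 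Specialising $H=M$ finishes the proof.

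The only genuinely non-routine ingredient is the closure of $\mathcal{CD}(G)$ under intersection, which ultimately rests on the Chermak--Delgado inequality $m_G(H)\,m_G(K)\le m_G(H\cap K)\,m_G(\langle H,K\rangle)$, proved via $|HK|\,|H\cap K|=|H|\,|K|$ and $C_G(\langle H,K\rangle)=C_G(H)\cap C_G(K)$. This is the main obstacle to a truly self-contained presentation; for a proposition attributed to \cite{Isaacs} it is standard practice to quote this inequality rather than reprove it, and I would do the same.
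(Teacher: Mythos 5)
The paper gives no proof of this proposition---it is quoted directly from Isaacs \cite[Chapter 1G]{Isaacs}---and your argument is a correct reconstruction of the standard one found there: parts (1) and (2) from $H\le C_G(C_G(H))$ and the maximality of $m^*(G)$, and part (3) from automorphism-invariance of $m_G$, closure of $\mathcal{CD}(G)$ under intersection (via the Chermak--Delgado inequality), and the observation that $C_G(HZ(G))=C_G(H)$. No gaps; deferring the proof of the lattice/intersection-closure property to the cited source is reasonable here.
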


\begin{prop} \label{prop1}
Let $G$ be a group. If $m_G(H)$ divides $|G|$ for arbitrary subgroups $H\leq G$, then $G=1$.
\end{prop}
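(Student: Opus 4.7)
The plan is to prove the statement by contrapositive: I will show that for any non-trivial $G$, there exists a single subgroup $H \le G$ whose Chermak--Delgado measure fails to divide $|G|$.

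The natural candidate is a Sylow $p$-subgroup $P$, where $p$ is any prime dividing $|G|$. The first step is to observe that, since $P$ is a non-trivial $p$-group, its centre $Z(P)$ is non-trivial; and since $Z(P) \le C_G(P)$, this forces $p$ to divide $|C_G(P)|$.

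The second step is a comparison of $p$-parts. By Sylow's theorem, the $p$-part of $|G|$ is exactly $|P|$, whereas the $p$-part of $m_G(P) = |P|\cdot|C_G(P)|$ is $|P|$ times the $p$-part of $|C_G(P)|$, which by the previous step is at least $p$. Hence the $p$-part of $m_G(P)$ strictly exceeds that of $|G|$, so $m_G(P) \nmid |G|$, contradicting the hypothesis. Therefore $G = 1$.

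I do not anticipate any substantive obstacle; the whole argument reduces to Sylow's theorem together with the elementary fact that $p$-groups have non-trivial centres. One might alternatively try $H = Z(G)$, since $m_G(Z(G)) = |Z(G)|\cdot|G|$ already forces $Z(G) = 1$, or $H = G$ itself, giving the same conclusion; but either of these only handles the case of non-trivial centre, so the Sylow argument is preferable because it applies uniformly to every non-trivial $G$, including centreless ones.
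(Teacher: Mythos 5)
Your proof is correct and is essentially the paper's argument: the paper takes $H=Z(P)$ (so that $P\le C_G(H)$) where you take $H=P$ (so that $Z(P)\le C_G(H)$), but in both cases the $p$-part of $|H|\,|C_G(H)|$ is at least $p\,|P|$, exceeding the $p$-part of $|G|$.
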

\begin{proof}
Assume that $G\not=1$, then $G$ has some Sylow $p$-subgroup $P\not=1$ for some prime $p$. It follows that $Z(P)\not=1$, and $m_G(Z(P))=|Z(P)||C_G(Z(P))|$ does not divide $|G|$, a contradiction.
\end{proof}

 By the proof of Proposition \ref{prop1}, one has the following corollary, also see Corollary 3 of \cite{Tarnauceanu1}.

\begin{cor} \label{cor1}
There is no non-trivial group $G$ such that $\mathcal{CD}(G)=\mathcal{L}(G)$.
\end{cor}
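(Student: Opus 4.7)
The plan is to derive this as an immediate corollary of Proposition \ref{prop1}. I would argue by contradiction: suppose $G$ is non-trivial and $\mathcal{CD}(G) = \mathcal{L}(G)$. Then $m_G(H) = m^*(G)$ for every subgroup $H \le G$. Specialising to the trivial subgroup pins down the common value, since $m^*(G) = m_G(1) = |1|\cdot|C_G(1)| = |G|$. Consequently $m_G(H) = |G|$, and in particular $m_G(H)$ divides $|G|$, for every $H \le G$. Proposition \ref{prop1} then forces $G = 1$, the desired contradiction.

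There is essentially no obstacle here; once Proposition \ref{prop1} is in hand, the corollary is a one-line observation, and the only delicate point is identifying the constant value of $m_G$ on $\mathcal{L}(G)$ as $|G|$ by evaluating at the trivial subgroup. Alternatively, and in keeping with the hint \emph{``By the proof of Proposition \ref{prop1}''} in the preceding paragraph, one could simply re-run the Sylow argument: choose a non-trivial Sylow $p$-subgroup $P$, note that $Z(P) \ne 1$ and that $P \le C_G(Z(P))$, and observe that the $p$-part of $m_G(Z(P)) = |Z(P)|\,|C_G(Z(P))|$ strictly exceeds the $p$-part of $|G|$. This rules out the equality $m_G(Z(P)) = |G| = m_G(1)$ that $\mathcal{CD}(G) = \mathcal{L}(G)$ would demand, and again produces the contradiction. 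Either formulation is short; I would present the former for its brevity.
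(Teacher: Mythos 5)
Your argument is correct and matches the paper's intent exactly: the paper states the corollary follows "by the proof of Proposition \ref{prop1}", i.e.\ from the observation that $\mathcal{CD}(G)=\mathcal{L}(G)$ forces $m_G\equiv m_G(1)=|G|$, whereupon Proposition \ref{prop1} (or a direct rerun of its Sylow/centre argument) yields $G=1$. Nothing is missing.
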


\begin{prop} \label{prop2}
Let $G$ be a group. If $|G|$ divides $m_G(H)$ for arbitrary subgroups $H\leq G$, then $G$ is nilpotent.
\end{prop}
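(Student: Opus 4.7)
The plan is to show that every Sylow subgroup of $G$ is normal, which is equivalent to $G$ being nilpotent. Fix a prime $p$ dividing $|G|$, let $P$ be a Sylow $p$-subgroup of $G$, and write $|G|=p^a m$ with $\gcd(p,m)=1$. Applying the hypothesis to $H=P$ gives $|G|=p^a m$ divides $|P|\cdot|C_G(P)|=p^a|C_G(P)|$, so $m\mid |C_G(P)|$.

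Next I would pin down the $p$-structure of $C_G(P)$. If $Q$ is a Sylow $p$-subgroup of $C_G(P)$, then $[P,Q]=1$, so $PQ$ is a $p$-subgroup of $G$ containing $P$; by the maximality of $P$ as a Sylow $p$-subgroup we get $Q\le P$, and combined with $Q\le C_G(P)$ this forces $Q\le Z(P)$. Since $Z(P)$ itself is a $p$-subgroup of $C_G(P)$, the reverse inclusion yields $Q=Z(P)$. Thus $Z(P)$ is the unique (hence normal) Sylow $p$-subgroup of $C_G(P)$, and in particular $|C_G(P)|=|Z(P)|\cdot m$.

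Now Schur-Zassenhaus provides a complement $K$ to the normal Hall subgroup $Z(P)$ inside $C_G(P)$, with $|K|=m$. Because $K\le C_G(P)$, we have $[P,K]=1$; moreover $|PK|=|P|\cdot|K|=p^a m=|G|$ since $P\cap K=1$. Combining these, $G=PK=P\times K$, so in particular $P\trianglelefteq G$. As $p$ was an arbitrary prime divisor of $|G|$, every Sylow subgroup of $G$ is normal, and hence $G$ is nilpotent.

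The single delicate step I expect to guard is the identification of $Z(P)$ as the entire Sylow $p$-subgroup of $C_G(P)$; once that is established, Schur-Zassenhaus and the order equality $|PK|=|G|$ deliver the direct-product decomposition automatically. Everything else is arithmetic on $|G|$, $|P|$, and $|C_G(P)|$.
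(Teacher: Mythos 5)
Your proof is correct, and its opening move is exactly the paper's: apply the hypothesis to $H=P$ for a Sylow $p$-subgroup $P$ to conclude that $|G|_{p'}$ divides $|C_G(P)|$. Where you diverge is in the finish. The paper simply notes that $C_G(P)$ therefore contains a Sylow $q$-subgroup of $G$ for every prime $q\ne p$ and concludes nilpotency from the arbitrariness of $p$ (implicitly: $PC_G(P)$ is a subgroup of $N_G(P)$ whose order is divisible by both $|G|_p$ and $|G|_{p'}$, so $N_G(P)=G$ and $P\trianglelefteq G$). You instead identify $Z(P)$ as the unique Sylow $p$-subgroup of $C_G(P)$ and invoke Schur--Zassenhaus to produce a complement $K$ with $G=P\times K$. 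That is all valid --- the step you flag as delicate is handled correctly, since any $p$-subgroup $Q\le C_G(P)$ satisfies $Q\le P\cap C_G(P)=Z(P)$, and it buys you the slightly stronger per-prime conclusion that $P$ has a central complement --- but it is heavier machinery than the statement requires: normality of $P$ already follows from $C_G(P)\le N_G(P)$ together with the divisibility $|G|\mid |P||C_G(P)|$, with no need to determine the Sylow structure of $C_G(P)$ or to construct a complement. In short, same key idea, with an optional and removable detour through Schur--Zassenhaus at the end.
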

\begin{proof}
Let $P$ be a Sylow $p$-subgroup of $G$ for arbitrary prime divisor $p$ of $|G|$. By the assumption, we have that $|G|$ divides  $m_G(P)=|P||C_G(P)|$, and thus, $|G|_{p'}$ divides $|C_G(P)|$, that is, $C_G(P)$ contains some Sylow $q$-subgroup for arbitrary  prime divisor $q\not=p$. By the arbitrariness of $p$, $G$ is nilpotent, as desired.
\end{proof}

\begin{thm} \label{thm1}
Let $G$ be a group. If ${\rm Im}(m_G)$ are consecutive integers, that is, ${\rm Im}(m_G)=\{n,n+1, \cdots, n+k\}$ for some positive integer $n$ and nonnegative integer $k$, then $k=0$, $n=|G|$ and $G=1$.
\end{thm}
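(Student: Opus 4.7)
The plan is to show that the hypothesis forces $\mathrm{Im}(m_G)$ to contain only one value, namely $|G|$, and then invoke Proposition \ref{prop1}.

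First I would record the trivial fact that $m_G(1) = 1\cdot|G| = |G|$, so $|G|\in \mathrm{Im}(m_G)$. This is the crucial anchor: whatever the consecutive block $\{n,n+1,\ldots,n+k\}$ turns out to be, it must contain $|G|$, so $n\le |G|\le n+k$.

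The heart of the argument is a divisibility observation. For any $H\le G$ we have $|H|\mid|G|$ and $|C_G(H)|\mid|G|$, so every prime dividing $m_G(H)=|H|\,|C_G(H)|$ already divides $|G|$. Now $\gcd(|G|,|G|+1)=1$ and $\gcd(|G|,|G|-1)=1$, so (assuming $|G|>1$) both $|G|+1$ and $|G|-1$ possess a prime factor coprime to $|G|$. Hence neither $|G|+1$ nor $|G|-1$ can lie in $\mathrm{Im}(m_G)$. The only degenerate case is $|G|-1=1$, handled by noting that $m_G(H)=1$ forces $H=C_G(H)=1$, which then forces $G=1$.

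Combining these two steps: if $k\ge 1$, then the consecutive block $\{n,\ldots,n+k\}$ contains $|G|$ together with at least one of its neighbours $|G|\pm 1$, contradicting the divisibility observation (provided $|G|>1$). Therefore either $|G|=1$ already, or $k=0$ and $\mathrm{Im}(m_G)=\{|G|\}$. In the latter situation, $m_G(H)=|G|$ for every $H\le G$, so in particular $m_G(H)\mid|G|$ for every subgroup, and Proposition \ref{prop1} forces $G=1$. Either way we reach $G=1$, whence $n=|G|=1$ and $k=0$, matching the stated conclusion.

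The only step that requires any thought is the divisibility lemma $|G|\pm 1\notin\mathrm{Im}(m_G)$, but once one notices that $m_G(H)$ has its prime support inside that of $|G|$, the rest is essentially bookkeeping — the real work of the theorem is already packaged inside Proposition \ref{prop1}.
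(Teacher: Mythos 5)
Your proof is correct and follows essentially the same route as the paper's: anchor at $m_G(1)=|G|$, rule out $|G|\pm 1$ from ${\rm Im}(m_G)$ by a divisibility/coprimality argument to force $k=0$, and then use the Sylow-centre argument (which you correctly recognize as being exactly Proposition \ref{prop1}) to conclude $G=1$. Your handling of the prime support of $m_G(H)$ and of the degenerate case $|G|-1=1$ is if anything slightly more careful than the paper's terse "since $|H|$ divides $|G|$, these two cases are impossible".
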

\begin{proof}
Note that $m_G(1)=|G| \in {\rm Im}(m_G)$. If  $k\ge 1$, then $|G|+1 \in {\rm Im}(m_G)$ or  $|G|-1 \in {\rm Im}(m_G)$. Thus, there exits a subgroup $H\le G$ such that $m_G(H)=|H||C_G(H)|=|G|+1$ or $|G|-1$. This implies that $|H|$ divides either $|G|+1$ or $|G|-1$. However, since $|H|$ divides $|G|$, these two cases are impossible. Thus, $k=0$ and $n=|G|$.

Assume that $G\not=1$. Then $G$ has some Sylow $p$-subgroup $P\not=1$ for some prime $p$. It follows that $Z(P)\not=1$, and $m_G(Z(P))=|Z(P)||C_G(Z(P))|\not=|G|$, a contradiction.
\end{proof}

The following Theorem is a variation of \cite[Theorem A]{Cocke}, and we use it in the proof of Theorem \ref{thmm}.

\begin{thm} \label{thm2}
Let $G$ be a group. Then the following are equivalent:

$(1)$ $m_G(H)|m_G(K)$ for arbitrary subgroups $H\leq K\leq G$.

$(2)$ $m_G(H)=m_G(H\cap Z(G))$ for arbitrary subgroups $H\leq G$.

$(3)$ $\mathcal{CD}(G)=\{H\leq G~|~Z(G)\leq H \}$. $Q_8\rtimes Z_3$
\end{thm}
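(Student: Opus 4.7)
The plan is to use condition (3) as the central hub and prove the cycle $(3)\Rightarrow(2)\Rightarrow(3)$ together with the equivalence $(3)\Leftrightarrow(1)$. The key constant in every direction is $m^{*}(G)=|G|\cdot|Z(G)|$, which under (3) follows immediately since $Z(G)\in\mathcal{CD}(G)$ gives $m^{*}(G)=m_{G}(Z(G))=|Z(G)||C_{G}(Z(G))|=|Z(G)||G|$.

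For $(3)\Rightarrow(2)$: for any $H\leq G$, the subgroup $HZ(G)$ contains $Z(G)$ and so lies in $\mathcal{CD}(G)$. Since $C_{G}(HZ(G))=C_{G}(H)$, this yields $|HZ(G)|\cdot|C_{G}(H)|=|G|\cdot|Z(G)|$. Combined with the standard identity $|HZ(G)|=|H|\cdot|Z(G)|/|H\cap Z(G)|$, this gives $m_{G}(H)=|G|\cdot|H\cap Z(G)|$, which equals $m_{G}(H\cap Z(G))$ because $H\cap Z(G)\leq Z(G)$ forces $C_{G}(H\cap Z(G))=G$. The implication $(2)\Rightarrow(3)$ is then immediate: (2) rewrites as $m_{G}(H)=|G|\cdot|H\cap Z(G)|$, and this is maximised (equal to $|G|\cdot|Z(G)|$) precisely when $Z(G)\leq H$. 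The implication $(3)\Rightarrow(1)$ follows from the same formula: if $H\leq K$ then $H\cap Z(G)\leq K\cap Z(G)$ are subgroups of the abelian group $Z(G)$, so Lagrange's theorem gives $|H\cap Z(G)|\mid|K\cap Z(G)|$, hence $m_{G}(H)\mid m_{G}(K)$.

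The subtle direction is $(1)\Rightarrow(3)$. Applying (1) to the chains $1\leq H$ and $H\leq G$ gives $|G|=m_{G}(1)\mid m_{G}(H)\mid m_{G}(G)=|G|\cdot|Z(G)|$ for every subgroup $H$. In particular, for any $K$ with $Z(G)\leq K\leq G$, the chain $Z(G)\leq K\leq G$ forces $|G|\cdot|Z(G)|=m_{G}(Z(G))\mid m_{G}(K)\mid|G|\cdot|Z(G)|$, so $m_{G}(K)=|G|\cdot|Z(G)|$. This pins down $m^{*}(G)=|G|\cdot|Z(G)|$ and shows $\{K\leq G:Z(G)\leq K\}\subseteq\mathcal{CD}(G)$. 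For the reverse inclusion, take any $H\in\mathcal{CD}(G)$ and compare with $HZ(G)\geq H$: (1) gives $m_{G}(H)\mid m_{G}(HZ(G))$, while maximality gives $m_{G}(HZ(G))\leq m^{*}(G)=m_{G}(H)$, so $m_{G}(HZ(G))=m_{G}(H)$. Since $C_{G}(HZ(G))=C_{G}(H)$, this forces $|HZ(G)|=|H|$ and hence $Z(G)\leq H$.

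The main obstacle is organising the divisibility bookkeeping in $(1)\Rightarrow(3)$: one must first sandwich $m^{*}(G)$ between $|G|\cdot|Z(G)|$ from below and above before invoking the $HZ(G)$-comparison to extract $Z(G)\leq H$ from membership in $\mathcal{CD}(G)$. The remaining steps are essentially one-line consequences of the formula $m_{G}(H)=|G|\cdot|H\cap Z(G)|$, with Proposition \ref{prop2} providing (as a by-product) that any such $G$ is nilpotent, although nilpotency is not needed for the logical structure of the argument.
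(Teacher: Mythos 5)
Your proof is correct, and it is worth noting that the paper itself offers \emph{no} proof of this statement: it simply records the theorem as ``a variation of Theorem A'' of Cocke and McCulloch and moves on. So your argument is the only self-contained one on the table, and it holds up. The engine of your proof is the identity $m_G(H)=|G|\cdot|H\cap Z(G)|$, obtained from condition (3) by comparing $H$ with $HZ(G)$ via $C_G(HZ(G))=C_G(H)\cap C_G(Z(G))=C_G(H)$ and the product formula $|HZ(G)|=|H||Z(G)|/|H\cap Z(G)|$; once that identity is in hand, (2), (1) and the description of $\mathcal{CD}(G)$ all fall out by Lagrange. The delicate direction $(1)\Rightarrow(3)$ is also handled correctly: sandwiching $m_G(K)$ between $m_G(Z(G))=|G||Z(G)|$ and $m_G(G)=|G||Z(G)|$ for $Z(G)\le K$ pins down $m^*(G)$ and gives one inclusion, and the comparison $m_G(H)\mid m_G(HZ(G))\le m^*(G)=m_G(H)$ together with $C_G(HZ(G))=C_G(H)$ forces $HZ(G)=H$ for the other. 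Your side remark that Proposition \ref{prop2} yields nilpotency of such $G$ under (1) is accurate and consistent with how the paper uses the theorem. The only cosmetic point: the stray ``$Q_8\rtimes Z_3$'' at the end of condition (3) in the paper's statement is evidently a typographical leftover and rightly plays no role in your argument.
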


\begin{lem}\emph{\cite[Theorem 2.1]{Tarnauceanu2}\label{lem1}}
Let $G$ be a group. For each prime $p$ dividing the order of $G$ and
$P\in {\rm Syl}_p(G)$, let $|Z(P)| = p^{n_p}$. Then
$$|{\rm Im}(m_G)|\ge 1+\sum_p n_p.$$
\end{lem}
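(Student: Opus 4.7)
The plan is to exhibit $1+\sum_p n_p$ subgroups whose Chermak--Delgado measures are pairwise distinct, using $p$-adic valuations as the separating device.

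First, for each prime $p$ dividing $|G|$ with $P\in{\rm Syl}_p(G)$ and $|Z(P)|=p^{n_p}$, I would fix a chain
$$1=Z_{p,0}<Z_{p,1}<\cdots<Z_{p,n_p}=Z(P),\qquad |Z_{p,i}|=p^{i},$$
which exists because $Z(P)$ is a finite abelian $p$-group. Since $Z_{p,i}\leq Z(P)$, the Sylow subgroup $P$ centralises $Z_{p,i}$, so $P\leq C_G(Z_{p,i})$; thus $P$ is also a Sylow $p$-subgroup of $C_G(Z_{p,i})$, which forces $|C_G(Z_{p,i})|_p=|P|=p^{a_p}$ where $a_p=v_p(|G|)$. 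Consequently
$$v_p\bigl(m_G(Z_{p,i})\bigr)=v_p(|Z_{p,i}|)+v_p(|C_G(Z_{p,i})|)=i+a_p.$$

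Next I would read off distinctness from these valuations. For a single prime $p$, the measures $m_G(Z_{p,0}),m_G(Z_{p,1}),\ldots,m_G(Z_{p,n_p})$ have pairwise different $p$-adic valuations $a_p,a_p+1,\ldots,a_p+n_p$, hence are pairwise different. For two distinct primes $p\neq q$ and any $i\geq 1$, the value $m_G(Z_{p,i})$ has $p$-adic valuation $a_p+i>a_p$, whereas $v_p\bigl(m_G(Z_{q,j})\bigr)=v_p(|C_G(Z_{q,j})|)\leq v_p(|G|)=a_p$ for every $j$; hence $m_G(Z_{p,i})\neq m_G(Z_{q,j})$ whenever $p\neq q$ and $i\geq 1$. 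Noting that $m_G(Z_{p,0})=m_G(1)=|G|$ is one and the same value for all $p$, the distinct values altogether number $1+\sum_p n_p$, as required.

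There is really no serious obstacle here; the content of the argument is the single observation that $P\in{\rm Syl}_p(C_G(H))$ whenever $H\leq Z(P)$, which pins down the $p$-part of $m_G(H)$, and the rest is bookkeeping with valuations. The slightly delicate point, if any, is to remember that the trivial subgroup is shared across all primes and must be counted only once; this is exactly why the bound is $1+\sum_p n_p$ rather than $\sum_p(n_p+1)$.
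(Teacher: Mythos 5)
Your argument is correct: the key observation that $P\leq C_G(H)$ for every $H\leq Z(P)$ pins down the $p$-part of $m_G(H)$ as $p^{i+a_p}$, and the valuation bookkeeping (including counting the trivial subgroup only once) is carried out accurately. The paper only cites this lemma from T\u{a}rn\u{a}uceanu's article without reproducing a proof, and your chain-plus-valuation argument is essentially the same as the one in that source, so there is nothing further to reconcile.
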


Recall that a generalised quaternion 2-group is a group of order $2^n$ for some
positive integer $n \ge 3$, defined by
$$Q_{2^n} =\langle a, b~|~a^{2^{n-1}}=1,~a^{2^{n-2}}=b^2,~b^{-1}ab=a^{-1}\rangle.$$

It is directly checked that $m^*(Q_{2^n})=2^{2n-2}$, and

$$
\mathcal{CD}(Q_{2^n}) =
\begin{cases}
\{Q_8, \langle a \rangle, \langle b \rangle, \langle ab \rangle, \langle a^2 \rangle\} & \text{if } n = 3, \\
\{\langle a \rangle\} & \text{if } n \geq 4.
\end{cases}
$$

\begin{lem}\emph{\cite[Proposition 1.3]{Berkovich}\label{lem2}}
A $p$-group has a unique subgroup of order $p$ if and only if it is
either cyclic or a Generalised Quaternion 2-group.
\end{lem}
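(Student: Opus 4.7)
The forward direction is a direct verification. In a cyclic $p$-group the uniqueness of subgroups of each order gives a single subgroup of order $p$; in $Q_{2^n}$, every element outside $\langle a\rangle$ has the form $a^i b$ and squares to $a^{2^{n-2}}$, so $a^{2^{n-2}}=b^2$ is the unique involution and $\langle a^{2^{n-2}}\rangle$ the unique subgroup of order $2$.

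For the converse, I would induct on $|G|$, the case $|G|=p$ being trivial. Let $G$ be a non-trivial $p$-group with a unique subgroup $Z$ of order $p$. Then $Z$ is characteristic in $G$, and since any normal subgroup of order $p$ in a $p$-group is central, $Z\le Z(G)$. The key observation is that every non-trivial subgroup $H\le G$ inherits the hypothesis, because $H$ itself contains some subgroup of order $p$, which by uniqueness in $G$ must be $Z$; so by induction every proper subgroup of $G$ is cyclic or generalised quaternion.

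The backbone of the argument is the classical theorem classifying $p$-groups whose maximal subgroups are all cyclic: such a group is cyclic, isomorphic to $Z_p\times Z_p$, or, when $p=2$, dihedral, semidihedral, or generalised quaternion. If every maximal subgroup of $G$ is cyclic, $G$ must be one of these five types; the uniqueness hypothesis then rules out $Z_p\times Z_p$ (which contains $p+1$ subgroups of order $p$) and the dihedral and semidihedral $2$-groups (which contain several involutions), leaving $G$ cyclic or generalised quaternion. This already settles the case of odd $p$, since generalised quaternion groups exist only when $p=2$.

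The main obstacle is the remaining case $p=2$ with some maximal subgroup $M\cong Q_{2^n}$, $n\ge 3$, of index $2$ in $G$; here I would show $G\cong Q_{2^{n+1}}$. Writing $M=\langle a,b\mid a^{2^{n-1}}=1,\ b^2=a^{2^{n-2}},\ bab^{-1}=a^{-1}\rangle$, the unique involution of $M$ (and hence of $G$) is $a^{2^{n-2}}$. Pick $g\in G\setminus M$; then $g^2\in M$ and, since $G$ has a single involution already lying in $M$, the order of $g$ is at least $4$. Conjugation by $g$ is an automorphism of $M$ that must preserve the unique involution, and combining this with the requirement that no new involution arise in the coset $gM$ tightly constrains $g$: one shows the existence of a cyclic subgroup of order $2^n$ in $G$ on which a suitable element $g'\in gM$ acts by inversion with $(g')^2=a^{2^{n-2}}$, which are exactly the defining relations of $Q_{2^{n+1}}$. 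The delicate sub-step is to exclude the competing index-$2$ extensions of $Q_{2^n}$ — in particular $Q_{2^n}\times Z_2$, the semidihedral group $SD_{2^{n+1}}$, and the modular maximal-cyclic group of order $2^{n+1}$ — by observing that each of them manufactures an involution outside $\langle a^{2^{n-2}}\rangle$ and hence contradicts the uniqueness hypothesis.
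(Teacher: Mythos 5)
The paper offers no proof of this lemma at all—it is quoted directly from \cite[Proposition 1.3]{Berkovich}—so your attempt has to stand on its own. The inductive frame you set up (every nontrivial subgroup inherits the unique-subgroup-of-order-$p$ property, hence every maximal subgroup is cyclic or generalised quaternion) is sound and close to the standard textbook route, but there are two genuine problems. First, the ``classical theorem'' you invoke is misstated: the $p$-groups \emph{all} of whose maximal subgroups are cyclic are only $Z_{p^n}$, $Z_p\times Z_p$ and $Q_8$; dihedral and semidihedral $2$-groups, and $Q_{2^n}$ for $n\ge 4$, all have non-cyclic maximal subgroups, so they do not belong on that list. What you need is the classification of $p$-groups possessing \emph{some} cyclic subgroup of index $p$ (\cite[Theorem 1.2]{Berkovich}, the odd case of which the paper records as Lemma \ref{lem3}); but that list also contains $Z_{p^{n-1}}\times Z_p$ and the modular group $M_{p^n}$, which you omit and which must be excluded by the same count of $p+1$ subgroups of order $p$ that you apply to the others. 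This is reparable, but as written the step is wrong.

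Second, and more seriously, your Case B ($p=2$ with a maximal subgroup $M\cong Q_{2^n}$) is not a proof: its entire content is the sentence ``one shows the existence of a cyclic subgroup of order $2^n$ in $G$,'' and that existence claim is precisely the hard point. The largest cyclic subgroup of $M$ has order $2^{n-1}$, so the required $Z_{2^n}$ must be assembled from elements outside $M$; this needs an actual argument (for instance: show $\langle a\rangle\trianglelefteq G$, note that an abelian $2$-group with a unique involution is cyclic so that $C_G(\langle a\rangle)>\langle a\rangle$ would already yield a cyclic subgroup of index $2$, and otherwise embed $G/\langle a\rangle$ into $\Aut(Z_{2^{n-1}})$ and analyse the possible actions), after which the cyclic-maximal-subgroup classification finishes. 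Likewise, ``excluding the competing index-$2$ extensions'' presupposes that $Q_{2^n}\times Z_2$, $SD_{2^{n+1}}$ and the modular group exhaust the possibilities, which you have not established (and the modular group of order $2^{n+1}$ does not in fact contain $Q_{2^n}$). As it stands, the $p=2$ case—the only delicate part of the lemma—is asserted rather than proved.
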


The following well known result is a special case of \cite[Theorem 1.2]{Berkovich}.

\begin{lem}\emph{\label{lem3}}
Let $G$ be a nonabelian
$p$-group of order $p^n$  and $p > 2$. Suppose that $G$ has a cyclic subgroup $A=\langle a \rangle$ of index $p$. Then $G$ is isomorphic to $$M_{p^n} =\langle a, b~|~a^{p^{n-1}}=b^p=1,~b^{-1}ab=a^{p^{n-2}+1}\rangle.$$
 \end{lem}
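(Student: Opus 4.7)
The plan is to exploit the fact that a cyclic subgroup of index $p$ in a $p$-group is automatically normal, reducing the classification to (i) determining the possible conjugation actions on $A$ and (ii) showing the complement can be chosen of order $p$. First I would observe that $A$ is a maximal subgroup, hence normal (alternatively: $[G:A]=p$ is the smallest prime dividing $|G|$). Pick any $b \in G \setminus A$ and let $\sigma \in \Aut(A)$ be conjugation by $b$. Since $G$ is nonabelian, $\sigma \ne 1$. Because $b^p \in A$ and $A$ is abelian, conjugation by $b^p$ is trivial on $A$, so $\sigma^p = 1$.

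Next I would invoke the structure $\Aut(A) \cong \Aut(Z_{p^{n-1}}) \cong Z_{p^{n-2}} \times Z_{p-1}$, valid because $p$ is odd. Its Sylow $p$-subgroup is cyclic, and its unique subgroup of order $p$ is generated by $\alpha \colon a \mapsto a^{1+p^{n-2}}$. Therefore $\sigma = \alpha^j$ for some $j \in \{1,\dots,p-1\}$. Replacing $b$ by $b^{j^{-1} \bmod p}$ (which still lies outside $A$ since $\gcd(j^{-1},p)=1$ and $G/A$ has order $p$) we may assume $b^{-1}ab = a^{1+p^{n-2}}$. At this point $G = \langle a, b\rangle$ with the desired commutation relation; the remaining task is to promote $b$ to an element of order exactly $p$.

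Write $b^p = a^m$ and look for $c = a^i b$ with $c^p = 1$, still satisfying $c^{-1}ac = a^{1+p^{n-2}}$ (the action is unchanged because $a \in A$ is central on $A$). A commutator-collection computation yields
\[
(a^i b)^p \;=\; a^{i N}\, b^p \qquad \text{where } N = 1 + r + r^2 + \cdots + r^{p-1},\ r = 1+p^{n-2}.
\]
The crux is showing $N \equiv p \pmod{p^{n-1}}$: expanding $r^k = (1+p^{n-2})^k$ by the binomial theorem gives $N \equiv p + p^{n-2}\binom{p}{2} \pmod{p^{2(n-2)}}$, and for $p > 2$ the factor $\binom{p}{2} = p(p-1)/2$ is divisible by $p$, which collapses the correction modulo $p^{n-1}$. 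With $N \equiv p \pmod{p^{n-1}}$, the congruence $iN \equiv -m \pmod{p^{n-1}}$ is solvable in $i$ provided $p \mid m$, which itself follows from $b^p \in A^p = \langle a^p\rangle$ (a consequence of $G/A^p$ being abelian of exponent $p$ when $p$ is odd). Choosing such $i$ gives $c$ of order $p$, and the presentation $a^{p^{n-1}} = c^p = 1,\ c^{-1}ac = a^{1+p^{n-2}}$ identifies $G$ with $M_{p^n}$.

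The main obstacle is the collection identity and its consequence $N \equiv p \pmod{p^{n-1}}$; this is exactly the point where the hypothesis $p > 2$ enters, and it is also the reason the analogous classification for $p = 2$ picks up the extra families (dihedral, semidihedral, generalised quaternion). Everything else is a routine translation between the action on $A$ and the defining presentation.
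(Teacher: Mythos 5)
Your argument is correct, but note that the paper does not actually prove this lemma: it simply cites it as a special case of Theorem 1.2 of Berkovich's \emph{Groups of prime power order}, so what you have written is a self-contained replacement for an external reference rather than an alternative to an internal argument. Your route is the standard one for this classification: normality of the maximal subgroup $A$, the structure of $\Aut(Z_{p^{n-1}})$ for odd $p$ forcing $\sigma$ into the unique order-$p$ subgroup generated by $a\mapsto a^{1+p^{n-2}}$, the collection identity $(a^ib)^p=a^{iN}b^p$ with $N=1+r+\cdots+r^{p-1}\equiv p \pmod{p^{n-1}}$, and the adjustment of $b$ to an element of order $p$. All the key steps check out, including the crucial use of $p>2$ in $p\mid\binom{p}{2}$ and the fact that $p^{2(n-2)}\ge p^{n-1}$ for $n\ge 3$. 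Two small points deserve tightening. First, your justification that $p\mid m$ (where $b^p=a^m$) via ``$G/A^p$ has exponent $p$ when $p$ is odd'' is true but roundabout; the cleanest argument is that $p\nmid m$ would give $a\in\langle b^p\rangle\le\langle b\rangle$, making $G=\langle b\rangle$ cyclic and hence abelian, a contradiction (equivalently, $A^p=\Phi(G)$ since $G$ is noncyclic and $2$-generated, so $G/A^p$ is elementary abelian; oddness of $p$ is not the operative hypothesis here). Second, the collection identity $(a^ib)^p=a^{iN}b^p$ with $N=\sum r^k$ holds with $r$ defined by $bab^{-1}=a^r$ rather than $b^{-1}ab=a^r$; since $(1+p^{n-2})^{-1}\equiv 1-p^{n-2}\pmod{p^{n-1}}$ generates the same order-$p$ subgroup of $\Aut(A)$, this is only a normalization mismatch and does not affect the conclusion, but the conventions should be made consistent.
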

It is directly checked that $m^*(M_{p^n})=p^{2n-2}$, and~$\langle a \rangle \in \mathcal{CD}(M_{p^n})$. Furthermore, $M_{p^n}$ contains precisely one conjugacy class of non-normal subgroups, represented by $\langle b \rangle$.

The following result is a special case of \cite[Theorem 13.7]{Berkovich}.

\begin{lem}\emph{\label{lem4}}
Let $G$ be a $p$-group and $p > 3$. Suppose that $G$
has no normal elementary abelian subgroups of order $p^3$. Then one of the following
holds:

$(1)$ $G$ is metacyclic.


$(2)$ $G=EH$, where $E=\Omega_1(G)$ is nonabelian of order $p^3$ and exponent $p$, $H$ is
cyclic of index $p^2$ in $G$, $Z(G)\le H$ is cyclic, $|G:Z(G)|\le p^3$, $|H:C_G(E)|\le p$.
 \end{lem}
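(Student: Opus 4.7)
The plan is to argue by cases on whether $G$ is metacyclic. If $G$ is metacyclic, we land in conclusion (1), so the substance lies in assuming $G$ is \emph{not} metacyclic and establishing the structural description in (2). The key external input is Blackburn's classical theorem: for odd $p$, a $p$-group is metacyclic if and only if every normal abelian subgroup is two-generated. Hence, under the non-metacyclic hypothesis, $G$ contains a normal abelian subgroup $A$ of rank at least three.

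Since $\Omega_1(A)$ is characteristic in $A$, it is an elementary abelian normal subgroup of $G$; by the hypothesis it has rank at most $2$. I would then pass to $E := \Omega_1(G)$ and use that $p>3$ to invoke regularity of $p$-groups of nilpotency class less than $p$ (Hall's criterion), concluding that $E$ has exponent $p$. The hypothesis, combined with Blackburn, forces $E$ to be nonabelian: an elementary abelian $E$ of rank $\ge 3$ would violate the hypothesis, while an abelian $E$ of rank $\le 2$ would make every normal abelian subgroup two-generated, contradicting non-metacyclicity. Finally, any nonabelian $p$-group of exponent $p$ and order at least $p^4$ possesses a normal elementary abelian subgroup of rank $3$, so $E$ has order exactly $p^3$, i.e.\ $E$ is the Heisenberg group.

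Having pinned down $E$, I would set $C := C_G(E)$ and analyse it directly. Any element of order $p$ in $C$ lies in $E \cap C = Z(E)$, so $\Omega_1(C) = Z(E)$ is cyclic of order $p$; Lemma \ref{lem2} (together with $p$ odd) then forces $C$ to be cyclic. Since $G/C$ embeds into a Sylow $p$-subgroup of $\Aut(E)$, which has order $p^3$, we obtain $|G:C|\le p^3$. One then takes $H$ equal to $C$ when $|G:C|=p^2$, or $C$ extended by a suitable element of $G$ when $|G:C|=p^3$, verifying in each case that the enlargement remains cyclic of index $p^2$ and that $|H:C_G(E)|\le p$. The remaining numerical constraints $Z(G)\le H$ and $|G:Z(G)|\le p^3$ follow from $Z(G)\le C_G(E)=C\le H$ and $G=EH$ by direct arithmetic.

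The principal obstacle is controlling the exponent of $E=\Omega_1(G)$, and this is exactly where the hypothesis $p>3$ is indispensable: for $p\in\{2,3\}$ there are irregular $p$-groups in which $\Omega_1(G)$ acquires elements of order greater than $p$, and the dichotomy genuinely fails. I would therefore devote the core of the argument to establishing regularity (via a class bound on the relevant section of $G$) so that $E$ has exponent $p$; once $E$ is identified as the Heisenberg group, the analysis of $C_G(E)$ and the construction of the cyclic complement $H$ become routine bookkeeping with automorphism groups of extraspecial $p$-groups.
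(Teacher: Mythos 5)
The paper does not actually prove this lemma; it simply quotes it as a special case of Blackburn's theorem (Theorem 13.7 in Berkovich's book), so your sketch has to stand on its own --- and it does not, because its declared key input is false. There is no theorem asserting that for odd $p$ a $p$-group is metacyclic if and only if every normal abelian subgroup is two-generated: the extraspecial group of order $p^3$ and exponent $p$ --- precisely the group that is supposed to arise as $\Omega_1(G)$ in conclusion (2) --- has all of its normal abelian subgroups of rank at most $2$, yet is not metacyclic. Worse, if the equivalence you invoke were true, your argument would prove too much: a normal abelian subgroup $A$ of rank at least $3$ yields the normal elementary abelian subgroup $\Omega_1(A)$ of rank at least $3$, and a $G$-chief series inside it produces a normal elementary abelian subgroup of order exactly $p^3$, contradicting the hypothesis; hence every group satisfying the hypothesis would be metacyclic and alternative (2) could never occur. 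You in fact write down both halves of this contradiction (``$A$ has rank at least three'' and ``$\Omega_1(A)$ has rank at most $2$'') without noticing that they are incompatible.

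The second gap is that the genuinely hard content of Blackburn's theorem --- that $\Omega_1(G)$ has exponent $p$ and order exactly $p^3$ --- is nowhere established. Hall's regularity criterion applies to groups of class less than $p$, but you have no a priori bound on the class of $G$, nor on the class of $\Omega_1(G)$ before its order is known, and $\Omega_1$ of an irregular group can indeed acquire elements of order greater than $p$; saying you would ``devote the core of the argument to establishing regularity via a class bound on the relevant section'' names the obstacle without overcoming it. Likewise, when $|G:C_G(E)|=p^3$, the existence of a cyclic overgroup $H$ of $C_G(E)$ with $|H:C_G(E)|=p$ and $G=EH$ is asserted as ``routine bookkeeping'' but needs a genuine argument about lifting a generator of $G/EC_G(E)$. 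The steps that do work (cyclicity of $C_G(E)$ via Lemma \ref{lem2} once $E$ is known to be extraspecial of order $p^3$, and the bound $|G:C_G(E)|\le p^3$ from the Sylow $p$-subgroup of $\Aut(E)$) are the easy ones; the honest route here is to cite Blackburn's classification, as the paper does.
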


\section{Proof of Theorem \ref{thmm}}

\noindent{\bf Proof of Theorem \ref{thmm}.} (1) If $G\cong Z_p$ or $Q_8$, then clearly $v(G)=1$.

Conversely, assume that $v(G)=1$ and let $H$ be a
representative of the single conjugacy class of subgroups of $G$ which are not in $\mathcal{CD}(G)$. Then $|{\rm Im}(m_G)|=2$. By Lemma \ref{lem1}, $G$ is a $p$-group and $|Z(G)|=p$ for some prime $p$. Set $|G|=p^n$. Then $m_G(1)=p^n$, $m_G(Z(G))=p^{n+1}=m^*(G)$. It follows that $H=1$.  Then $K\in {\cal CD}(G)$ for arbitrary subgroups $K \le G$ with $|K| = p$. This implies that $K=Z(G)$. Thus, $G$ has a unique
subgroup of order $p$. By Lemma \ref{lem2}, $G\cong Z_{p^n}$ or $G\cong Q_{2^n}$. We deduce that $G\cong Z_{p}$ or $G\cong Q_{8}$.\\

(2) If $G\cong Z_{q^2}$ where $q$ is a prime or $G\cong M_{p^3}$ for some prime $p>2$, then clearly $v(G)=2$.

Conversely, assume that $v(G)=2$ and let $H_1, H_2$ be
representatives of the conjugacy class of subgroups of $G$ which are not in $\mathcal{CD}(G)$. Then $|{\rm Im}(m_G)|=2$ or 3.\\

{\bf Case 2.1} Assume that $|{\rm Im}(m_G)|=2$. Again by Lemma \ref{lem1}, $G$ is a $p$-group and $|Z(G)|=p$ for some prime $p$. Set $|G|=p^n$. Then $m_G(1)=p^n$, $m_G(Z(G))=p^{n+1}=m^*(G)$. Thus, one of
$H_1$ and $H_2$ must be trivial, say $H_1 = 1$. If $|H_2|>p$, then all subgroups of order $p$ are in ${\cal CD}(G)$. It follows that $Z(G)$ is the unique subgroup of order $p$. By Lemma \ref{lem2}, $G\cong Z_{p^n}$ or $G\cong Q_{2^n}$, none of which could satisfy the hypothesis $v(G)=2$. Assume that $|H_2|=p$. Clearly $H_2\not=Z(G)$. Then $m_G(H_2)=p|C_G(H_2)|=p^n$, and so the length of the conjugacy class containing $H$ is equal to $p$. It follows that $G$ has exactly $p+1$ subgroups of order $p$. Thus, $G$
has no normal elementary abelian subgroups of order $p^3$. Assume that $p>3$. Clearly, the groups in (2) of Lemma \ref{lem4} could not satisfy the hypothesis. If $G$ is metacyclic, then $G$ has a cyclic subgroup of index $p$
since $Z(G)$ contained in every number of $\mathcal{CD}(G)$. By Lemm \ref{lem3},
 $G\cong M_{p^n}$. It follows that $m^*(G)=m_G(\langle a \rangle)=p^{2n-2}$. So $n=3$ and $G\cong M_{p^3}$. Assume that $p=3$, or $2$. By Theorem \ref{thmy}, $G\cong M_{3^3}$.\\

{\bf Case 2.2} Assume that $|{\rm Im}(m_G)|=3$. Then $m_G(H_1)$, $m_G(H_2)$ and $m^*(G)$ are distinct.
By Lemma \ref{lem1}, we discuss two cases.\\

{\bf Case 2.2.1} $|G| = p^n$ and $G$ has the centre of
orders $p$ or $p^2$.

Clearly, if $G$ is abelian, then $G\cong Z_{p^2}$. Assume that $G$ is not abelian. Then $m_G(1)<m_G(Z(G))=m_G(G)=m^*(G)$, and one of
$H_1$ and $H_2$ must be trivial, Say $H_1=1$.

Assume that $|Z(G)|=p$. If $|H_2|>p$, and all subgroups of order $p$ are in ${\cal CD}(G)$. It follows that $Z(G)$ is the unique subgroup of order $p$ and so $G\cong Q_{2^n}$, which could not satisfy $v(G)=2$.
Thus, $|H_2|=p$. By Theorem \ref{thm2}, $m_G(1)$ divides $m_G(H_2)$, so $m_G(H_2)=p^n$. This contradicts that $|{\rm Im}(m_G)|=3$.

Assume that $|Z(G)|=p^2$. Then all subgroups of order $p$ are outside of ${\cal CD}(G)$. By the hypothesis, $G$ has a unique subgroup of order $p$. The same can be said as above.\\

{\bf Case 2.2.2}. $|G| = p^nq^m$, $|Z(P)|=p$ and $|Z(Q)|=q$, where $P$ and $Q$ are the Sylow $p$-subgroups and Sylow $q$-subgroups of $G$, respectively.

We have that
 $$m_G(Z(P)) = p |C_G(Z(P))| = p^{n+1}q^x,$$ where $0 \le x \le m$,
and similarly, $$m_G(Z(Q)) =q|C_G(Z(Q))|=p^yq^{m+1},$$ where $0 \le y \le n$. Also,
 $$m_G(1) = p^nq^m,~m_G(G)=p^nq^m|Z(G)|.$$
Clearly, ${\rm Im}(m_G)=\{m_G(Z(P)), m_G(Z(Q)), m_G(1)\}$.

If $m^*(G)=m_G(1)=p^nq^m$, then $Z(P)$, $P$, $Z(Q)$, $Q$  are not in ${\cal CD}(G)$. By the hypothesis, $Z(P)=P$, and $Z(Q)=Q$, that is, $G$ has order $pq$. Clearly, in this case, $G \in {\cal CD}(G)$, and $Z(G)=1$. It is no loss to assume that $p>q$. Then $m_G(P)=p^2>m^*(G)$, a contradiction. Thus, $m^*(G)=m_G(Z(P))$ or $m_G(Z(Q))$.

In what follows, without loss of generality, assume that $m^*(G)=m_G(Z(P))$. Then $Z(P)\in {\cal CD}(G)$ and $Z(G)\le Z(P)$ by (3) of Proposition \ref{prop}. Clearly, $1$, and $Z(Q)=Q$ are not in ${\cal CD}(G)$.
 If $Z(G)=1$, then $G$ will not in ${\cal CD}(G)$, which contradicts that $v(G)=2$. If $Z(G)=Z(P)$ has order $p$,
then $$m^*(G)=m_G(Z(G))=m_G(Z(P))=m_G(P)=m_G(G)=p^{n+1}q.$$
This implies that $C_G(P)$ contains some Sylow $q$-subgroup, and so $G=P\times Q$, a contradiction.\\



(3)  If $G$ is a nonabelian group of  $pq$ for primes $p\not=q$, then $v(G)=3$.

Conversely, assume that $G$ is not nilpotent and $v(G)=3$. Let $H_1, H_2$ and $H_3$ be
representatives of the conjugacy class of subgroups of $G$ not in $\mathcal{CD}(G)$. Then $|{\rm Im}(m_G)|=3$ or 4.\\

{\bf Case 3.1}. Assume that $|{\rm Im}(m_G)|=3$. Then $|G| = p^nq^m$, $|Z(P)|=p$ and $|Z(Q)|=q$, where $P$ and $Q$ are the Sylow $p$-subgroups and Sylow $q$-subgroups of $G$, respectively.

 We have that  ${\rm Im}(m_G)=\{m_G(Z(P)), m_G(Z(Q)), m_G(1)\}$, as in Case 2.2.2.

 If $m^*(G)=m_G(1)=p^nq^m$, then $Z(P)$, $P$, $Z(Q)$, $Q$ are not in ${\cal CD}(G)$. Since $v(G)=3$, we have that either $Z(P)=P$, or $Z(Q)=Q$. Say $Z(P)=P$. Assume that $Z(Q)<Q$. Since $|Z(Q)|=q$, there exists a subgroup $K$ such that $Z(Q)<K<Q$. Then $K\in{\cal CD}(G)$ and so $m_G(K)=|K||C_G(K)|=pq^m$. This implies that $C_G(K)$ contains some Sylow $p$-subgroup, say $P\le C_G(K)$. Thus, $Z(Q)<K\le C_G(P)$ and $Z(Q)\le Z(G)$. It follows  that $m_G(Z(G))=|Z(G)|pq^m>m^*(G)$, a contradiction. Thus, $Z(Q)=Q$, that is, $G$ has order $pq$. It is no loss to assume that $p>q$. Then $m_G(P)=p^2>m^*(G)$, a contradiction.

 In what follows, without loss of generality, assume that $m^*(G)=m_G(Z(P))$. Then $Z(P)\in {\cal CD}(G)$ and $Z(G)\le Z(P)$ by (3) of Proposition \ref{prop}. If $Z(G)=1$, then $1$, $Z(Q)$, and $G$ will not in ${\cal CD}(G)$.  Since $v(G)=3$, we have that $Z(Q)=Q$, and $m^*(G)=m_G(Z(P))=p^{n+1}$.
Clearly, $P\in {\cal CD}(G)$ and so $P\trianglelefteq G$. If $Z(P)<P$, then $Z(P)Q\in {\cal CD}(G)$. However, $q$ divides $m_G(Z(P)Q)$, and $m_G(Z(P)Q)\not=m^*(G)$, a contradiction. Thus, $Z(P)=P$, that is, $G$ has order $pq$.

 If $Z(G)\not=1$, then $m_G(1)<m_G(G)$, and so $m_G(G)=m_G(Z(P))$ or $m_G(Z(Q))$. It is no loss to assume that $m_G(G)=m_G(Z(P))$. Then $|Z(G)|=p$. It follows that $Z(G)=Z(P)$. By (3) of Proposition \ref{prop}, $m^*(G)\not=m_G(Z(Q))$. It follows that $1$ and $Z(Q)$ are not in ${\cal CD}(G)$, and $$m^*(G)=m_G(Z(G))=m_G(Z(P))=m_G(G)=p^{n+1}q^m.$$
 Since $p^{n+1}$ divides $m_G(P)$, we have that $m_G(P)=m^*(G)$ and $P\in{\cal CD}(G)$. This implies that $Q\le C_G(P)$, and so $G=P\times Q$, a contradiction.\\

{\bf Case 3.2}. Assume that $|{\rm Im}(m_G)|=4$. Then $m_G(H_1)$, $m_G(H_2)$ $m_G(H_3)$ and $m^*(G)$ are distinct.
By Lemma \ref{lem1}, we discuss two cases.\\

 {\bf Case 3.2.1}. $|G| = p^nq^m$, $|Z(P)|=p$ or $p^2$ and $|Z(Q)|=q$, where $P$ and $Q$ are the Sylow $p$-subgroups and Sylow $q$-subgroups, respectively.\\

If $|Z(P)|=p$, we we derive a contradiction similar to Case 3.1. In what follows, we assume that $|Z(P)|=p^2$ and let $P_1$ be a subgroup of  $Z(P)$ of order $p$.

If $m^*(G)=m_G(1)=p^nq^m$, then $Z(P)$, $P$, $Z(Q)$, $Q$ are not in ${\cal CD}(G)$. Since $v(G)=3$, we have that $Z(P)=P$, $Z(Q)=Q$, and $G$ has order $p^2q$. Thus, $Q\le GL(2,p)$ and so $q=p+1$ or $q\le p-1$. This implies that $m_G(P)=p^4>p^2q=m^*(G)$, a contradiction.

If $m^*(G)=m_G(Z(Q))$, then 1, $P_1$ and $Z(P)=P$ are not in ${\cal CD}(G)$. If $Z(G)=1$, then $G$ will not in ${\cal CD}(G)$, contradicting that $v(G)=3$. Thus, $Z(G)=Z(Q)$ by (3) of Proposition \ref{prop}, and so $m^*(G)=m_G(Z(Q))=p^2q^{m+1}$.
Clearly, $Z(Q)P_1\in {\cal CD}(G)$. However, $p^3$ divides $m_G(Z(Q)P_1)$, and so $m_G(Z(Q)P_1)\not=m^*(G)$, a contradiction.

 If $m^*(G)=m_G(Z(P))$, then 1, $P_1$ and $Z(Q)=Q$ are not in ${\cal CD}(G)$. Clearly, $G\in {\cal CD}(G)$.
 So $Z(G)=Z(P)$ by (3) of Proposition \ref{prop}, and so $m^*(G)=m_G(Z(P))=p^{n+2}q$.
Clearly, $QP_1\in {\cal CD}(G)$. However, $m_G(Z(Q)P_1)\not=m^*(G)$, a contradiction.

If $m^*(G)=m_G(P_1)$, then 1, $Z(P)=P$ and $Z(Q)=Q$ are not in ${\cal CD}(G)$. Clearly, $G\in {\cal CD}(G)$.
 So $Z(G)=P_1$ by (3) of Proposition \ref{prop}, and so $m^*(G)=m_G(P_1)=p^{3}q$.
Clearly, $QP_1\in {\cal CD}(G)$. However, $m_G(QP_1)\not=m^*(G)$, a contradiction.\\


 {\bf Case 3.2.2}. $|G| = p^nq^mr^t$, $|Z(P)|=p$, $|Z(Q)|=q$, $|Z(R)|=r$, where $P$ $Q$ and $R$ are the Sylow $p$-subgroups, Sylow $q$-subgroups and Sylow $r$-subgroups of $G$, respectively.

We have that
$${\rm Im}(m_G)=\{m_G(Z(P)), m_G(Z(Q)), m_G(Z(R)), m_G(1)\}.$$

If $m^*(G)=m_G(1)$, then $Z(P)=P$, $Z(Q)=Q$ and $Z(R)=R$ are not in ${\cal CD}(G)$. Clearly, $G\in {\cal CD}(G)$ and $Z(G)=1$, and $|G| = pqr$. Since $G$ is solvable, $G$ has a normal maximal subgroup, say $QR$. Then $QR\in {\cal CD}(G)$. This implies that $P\le C_G(QR)$ and $QR\le C_G(P)$. It follows that $P\le Z(G)$, a contradiction.

 In what follows, it is no loss to assume that $m^*(G)=m_G(Z(P))$. Then $Z(P)\in {\cal CD}(G)$ and $Z(G)\le Z(P)$ by (3) of Proposition \ref{prop}. Clearly,  $1$, $Z(Q)=Q$ and $Z(R)=R$ are not in ${\cal CD}(G)$.
 If $Z(G)=1$, then $G$ will not in ${\cal CD}(G)$, which contradicts that $v(G)=3$. Thus, we have that $Z(G)=Z(P)$, and $m^*(G)=m_G(Z(P))=m_G(Z(G))=m_G(G)=p^{n+1}qr$.

Clearly, $P\in {\cal CD}(G)$ and so $P\trianglelefteq G$. Thus, $P$ has a $p$-complement in $G$, say $QR$. Then $QR\in {\cal CD}(G)$. However, $p^{n+1}$ does not divide $m_G(QR)$, and so $m_G(Z(P)Q)\not=m^*(G)$, a contradiction.

The proof is completed. $\Box$\\

\noindent{\bf Availability of data and materials}\\

The datasets supporting the conclusions of this article are included within the article.\\

\noindent{\bf Acknowledgements}\\

The first author is supported by the Guangxi Natural Science Foundation Program (2024GXNSFAA010514).

\bibliographystyle{amsplain}

\begin{thebibliography}{}

\bibitem{An} L. An, J. P. Brennan, H. Qu and E. Wilcox, Chermak-Delgado lattice extension theorems, Commun. Algebra, 43(2015): 2201–2213.

\bibitem{Berkovich} Y. Berkovich, Groups of prime power order, Volume 1, Walter de Gruyter, Berlin, New York, 2008.

\bibitem{Brewster1} B. Brewster, P. Hauck and E. Wilcox, Groups whose Chermak-Delgado lattice is a chain. J. Group Theory, 17(2014): 253–279.


\bibitem{Brewster2} B. Brewster, P. Hauck and E. Wilcox, Quasi-antichain Chermak-
Delgado lattices of finite groups, Arch der Math., 103 (2014):
301-311.


\bibitem{Brewster3} B. Brewster and E. Wilcox, Some groups with computable Chermak-Delgado lattices. Bull. Aus. Math. Soc., 86(2012): 29–40.

 \bibitem{Brewster4} B. Brewster, P. Hauck and E. Wilcox, Quasi-antichain Chermak-
Delgado lattices of finite groups, Archiv der Mathematik 103 (2014):
301-311.

 \bibitem{Burrell}D. Burrell, W. Cocke and R. McCulloch, On groups with few subgroups not in the
Chermak–Delgado lattice, Arnold Mathematical Journal, 10(2024): 265–279.

\bibitem{Chermak}A. Chermak and A. Delgado, A measuring argument for finite groups. Proc. AMS 107(1989): 907–914.

\bibitem{Cocke}W. Cocke and R. McCulloch, The Chermak–Delgado measure as a map on posets, Arch. Math. 123 (2024): 241–251.


\bibitem{Fasola}G. Fasol\u{a} and M. T\u{a}rn\u{a}uceanu, Finite groups with large Chermak-Delgado lattices, Bull.
Aust. Math. Soc., 107(3)(2023): 451–455.

\bibitem{Fasola2}G. Fasol\u{a}, On the Chermak-Delgado measure of a finite group, Results Math. (2024) 79:44 https://doi.org/10.1007/s00025-023-02080-5.

\bibitem{Glauberman}G. Glauberman, Centrally large subgroups of finite $p$-groups, J. Algebra,
300 (2006): 480-508.


\bibitem{Isaacs}I. M. Isaacs, Finite Group Theory, Providence, RI: American Mathematical Society, 2008.

\bibitem{McCulloch1} R. McCulloch, Chermak-Delgado simple groups, Comm. Algebra, 45
(2017): 983-991.

\bibitem{McCulloch2} R. McCulloch, Finite groups with a trivial Chermak-Delgado subgroup,
J. Group Theory, 21 (2018), 449-461.

\bibitem{McCulloch3} R. McCulloch and  M. T\u{a}rn\u{a}uceanu, Two classes of finite groups whose
Chermak-Delgado lattice is a chain of length zero, Comm. Algebra, 46(2018): 3092-3096.

\bibitem{McCulloch4} R. McCulloch and M. T\u{a}rn\u{a}uceanu, On the Chermak-Delgado lattice of
a finite group, Comm. Algebra, 48(2020): 37-44.





\bibitem{Tarnauceanu1}M. T\u{a}rn\u{a}uceanu, A note on the Chermak–Delgado lattice of a finite group, Comm. Algebra, 46(2018): 201–204.

\bibitem{Tarnauceanu2}M. T\u{a}rn\u{a}uceanu, Finite groups with a certain number of values of the Chermak–Delgado measure, J. Algebra Appl. 19 (2020), 2050088.

\bibitem{Wilcox}E. Wilcox, Exploring the Chermak-Delgado lattice. Math. Magazine, 89(2016): 38–44.


\end{thebibliography}

\end{document}